\def\@tocline#1#2#3#4#5#6#7{\relax
  \ifnum #1>\c@tocdepth 
  \else
    \par \addpenalty\@secpenalty\addvspace{#2}%
    \begingroup \hyphenpenalty\@M
    \@ifempty{#4}{%
      \@tempdima\csname r@tocindent\number#1\endcsname\relax
    }{%
      \@tempdima#4\relax
    }%
    \parindent\z@ \leftskip#3\relax \advance\leftskip\@tempdima\relax
    \rightskip\@pnumwidth plus4em \parfillskip-\@pnumwidth
    #5\leavevmode\hskip-\@tempdima
      \ifcase #1
       \or\or \hskip 1em \or \hskip 2em \else \hskip 3em \fi%
      #6\nobreak\relax
      \dotfill
      \hbox to\@pnumwidth{\@tocpagenum{#7}}
    \par
    \nobreak
    \endgroup
  \fi}
\newtheorem{theorem}{Theorem}
\numberwithin{theorem}{section}
\newtheorem{proposition}[theorem]{Proposition}
\theoremstyle{definition}
\theoremstyle{remark}
\newtheorem{remark}[theorem]{Remark}
\newcommand{\R}{\mathbb{R}}
\newcommand{\Rm}{\mathbb{R}^m}
\newcommand{\Dtwo}{\mathcal{D}_k}
\newcommand{\bx}{\boldsymbol{x}}
\newcommand{\by}{\boldsymbol{y}}
\newcommand{\bu}{\boldsymbol{u}}
\newcommand{\bv}{\boldsymbol{v}}
\newcommand{\be}{\begin{eqnarray*}}
\newcommand{\ee}{\end{eqnarray*}}
\newcommand{\bs}{\boldsymbol}
\newcommand{\Smone}{\mathbb{S}^{m-1}}
\newcommand{\Bm}{\mathbb{B}^{m}}
\newcommand{\Mk}{\mathcal{M}_k}
\newcommand{\Mkk}{\mathcal{M}_{k-1}}
\newcommand{\HK}{\mathcal{H}_k}
\newcommand{\Clm}{\mathcal{C}l_m}
\numberwithin{equation}{section}
\begin{document}

\title[Green's Formulas and Poisson's Equation for Bosonic Laplacians]{Green's Formulas and Poisson's Equation for Bosonic Laplacians}
\author[C. Ding]{Chao Ding}
\address{Department of Mathematics and Statistics, Masaryk University, Brno, Czech Republic}
\email{chaoding@math.muni.cz}
\author[J. Ryan]{John Ryan}
\address{Department of Mathematical Science, University of Arkansas, Fayetteville, AR. U.S.A.}
\email{jryan@uark.edu}
\date{\today}

\begin{abstract}
A bosonic Laplacian is a conformally invariant second order differential operator acting on smooth functions defined on domains in Euclidean space and taking values in higher order irreducible representations of the special orthogonal group. In this paper, we firstly introduce the motivation for study of the generalized Maxwell operators and bosonic Laplacians (also known as the higher spin Laplace operators). Then,  with the help of connections between Rarita-Schwinger type operators and bosonic Laplacians, we solve Poisson's equation for bosonic Laplacians. A representation formula for bounded solutions to Poisson's equation in Euclidean space is also provided. In the end, we provide Green's formulas for bosonic Laplacians in scalar-valued and Clifford-valued cases, respectively. These formulas reveal that bosonic Laplacians are self-adjoint with respect to a given $L^2$ inner product on certain compact supported function spaces. 
\medskip
	
\noindent\textit{Keywords: Bosonic Laplacians, Green's formula, Poisson's equation, Representation formula.}
	
\medskip
	
\noindent\textit{2000 Mathematics Subject Classification: 42Bxx, 42B37, 35J05.}

\end{abstract}

\maketitle
\section{Introduction}~\par
Clifford analysis is introduced as a generalization of complex function theory to the higher dimensional cases. Many researchers have successfully generalized the theory of one dimensional complex analysis to the higher dimension cases via Clifford analysis. For instance, the function theory of quaternion analysis is applied to study some boundary value problems in three or four dimensional spaces in \cite{Guer}. Many important properties and problems, such as integral formulas, series expansion, integral transforms and boundary value problems, have been generalized to higher dimensions as well in \cite{Guer,Guer1,Guer2}.
\par
The higher spin theory in Clifford analysis is the theory on functions taking values in irreducible representations of the spin group. These representation spaces are usually realized as spaces of homogeneous harmonic or monogenic (null solutions of the Dirac operator) polynomials. The study on this topic can be traced back to the work of Stein and Weiss given in \cite{SW}. Stein and Weiss introduced a technique to construct first order conformally invariant differential operators, named as Stein-Weiss gradients, with a certain type of projections. Bure\v{s}  et. al. \cite{Bures} investigated a class of generalized Rarita-Schwinger operators acting on functions taking values in irreducible representations of the spin group with weight $k+1/2$ via Clifford analysis  in 2002. These Rarita-Schwinger type operators were also studied by Dunkl et. al. \cite{Dunkl} with an analytic approach. In these two papers, one can notice that Rarita-Schwinger operators and  the Dirac operator have very similar properties, such as Cauchy's Theorem, Cauchy's integral formula, Stokes' Theorem, etc. Therefore, Rarita-Schwinger operators are also considered as generalizations of the Dirac operator in the higher spin theory. In 2016, Eelbode et. al. \cite{Eelbode} and De Bie et. al. \cite{DeBie} introduced generalizations of the Laplace operator  with respect to the conformal invariance property in the higher spin theory. These differential operators are named as  bosonic Laplacians ( the higher spin Laplace operators) or the generalized Maxwell operators, which are special cases of bosonic Laplacians. It is reasonable to expect bosonic Laplacians also have similar properties as the Laplace operator has. In \cite{DR,DWR}, the authors discovered intertwining operators, a Borel-Pompeiu formula and a Green type integral formula for bosonic Laplacians. Recently, the authors \cite{DTR} also studied Dirichlet problems for bosonic Laplacians in the upper-half space and the unit ball. Further, many important properties, such as the mean-value property, Cauchy's estimates and Liouville's Theorem, for null solutions to bosonic Laplacians have been found in \cite{DTR}. Here, we continue our investigation on properties of bosonic Laplacians in Euclidean space.
\par
\textbf{Main results:} In this paper, we firstly look into the generalized Maxwell operators by generalizing classical Maxwell equations to the higher spin spaces in Section $2$. This provides us the motivation for studying the generalized Maxwell operators and bosonic Laplacians. Some preliminaries of Clifford analysis setting, Rarita-Schwinger operators and bosonic Laplacians will also be introduced here as well.  In Section $3$, we use some properties of Rarita-Schwinger operators and their connections to bosonic Laplacians to solve Poisson's equation in the higher spin spaces. Section $4$ will be devoted to introducing Green's formulas in the higher spin spaces, which reveal that bosonic Laplacians are self-adjoint with respect to a given $L^2$ inner product. As in the harmonic anlysis, these formulas can possibly be applied in our future work on constructing Green's functions for solving certain boundary value problems.
\subsection*{Acknowledgements}
This paper is dedicated to Klaus G\"urlebeck on his 65th birthday. Chao Ding is supported by Czech Science Foundation, project GJ19-14413Y.
\section{Preliminaries}
\subsection{Notations}
Suppose that $\{e_1,\cdots,e_m\}$ is a standard orthonormal basis for the $m$-dimensional Euclidean space $\R^m$. The (real) Clifford algebra $\mathcal{C}l_m$ is generated by $\R^m$ with the relationship $e_ie_j+e_je_i=-2\delta_{i,j},\ 1\leq i,j\leq m.$ This implies that an element of the basis of the Clifford algebra can be written as $e_A=e_{j_1}\cdots e_{j_r},$ where $A=\{j_1, \cdots, j_r\}\subset \{1, 2, \cdots, m\}$ and $1\leq j_1< j_2 < \cdots < j_r \leq m.$ Hence any element $a\in \mathcal{C}l_m$ can be represented by $a=\sum_Aa_Ae_A$, where $a_A\in \mathbb{R}$. In particular, $\bs{e}_{\emptyset}=1$ and we call $a_{\emptyset}=Scr(a)$ the scalar part of $a$. The $m$-dimensional Euclidean space $\R^m$ is embedded into $\mathcal{C}l_m$ as follows.
\be
\R^m&&\longrightarrow\quad \mathcal{C}l_m,\\
(x_1,\cdots,x_m)&&\ \mapsto\quad \sum_{j=1}^mx_je_j.
\ee
Hence, for $\bx\in\R^m$, one can easily see that $\|\bx\|^2=\sum_{j=1}^mx_j^2=-\bx^2$. 
For $a=\sum_Aa_Ae_A\in\Clm$, we define the reversion of $a$ as
\begin{eqnarray*}
\widetilde{a}=\sum_{A}(-1)^{|A|(|A|-1)/2}a_Ae_A,
\end{eqnarray*}
where $|A|$ is the cardinality of $A$. In particular, $\widetilde{e_{j_1}\cdots e_{j_r}}=e_{j_r}\cdots e_{j_1}$. Also $\widetilde{ab}=\widetilde{b}\widetilde{a}$ for $a, b\in\Clm$.
\par 
Now suppose $\bs{a}\in \mathbb{S}^{m-1}\subseteq \mathbb{R}^m$ and $\bx\in\R^m$. If we consider $\bs{a}\bx\bs{a}$, we may decompose
$$\bx=\bx_{\bs{a}\parallel}+\bx_{\bs{a}\perp},$$
where $\bx_{\bs{a}\parallel}$ is the projection of $\bx$ onto $\bs{a}$ and $\bx_{\bs{a}\perp}$ is the remainder part of $\bx$ perpendicular to $\bs{a}$. Hence $\bx_{\bs{a}\parallel}$ is a scalar multiple of $a$ and we have
$$\bs{a}\bx\bs{a}=\bs{a}\bx_{\bs{a}\parallel}\bs{a}+\bs{a}\bx_{\bs{a}\perp}\bs{a}=-\bx_{\bs{a}\parallel}+\bx_{\bs{a}\perp}.$$
So the action $\bs{a}\bx\bs{a}$ describes a reflection of $x$ in the direction of $\bs{a}$. More details can be found in, for instance, \cite{Del}.
\par
The classical Dirac operator is defined as $D_{\bx}=\sum_{j=1}^m\partial_{x_j}e_j$, which factorizes the Laplace operator $\Delta_{\bx}=-D_{\bx}^2$. A $\Clm$-valued function $f(\bx)$ defined on a domain $\Omega$ in $\R^m$ is called \emph{left monogenic} if it satisfies $D_{\bx}f(\bx)=0$ in $\Omega$. Since multiplication of Clifford numbers is not commutative in general, there is a similar definition for right monogenic functions.
\subsection{Rarita-Schwinger type operators}
Let $\HK(\mathcal{C}l_m)\ (\mathcal{M}_k(\mathcal{C}l_m))$ stand for the space of Clifford-valued harmonic (monogenic) polynomials homogeneous of degree $k$. Notice that if $h_k(\bu)\in\HK(\mathcal{C}l_m)$, then $D_{\bu}h_k(\bu)\in\mathcal{M}_{k-1}(\mathcal{C}l_m)$, but $D_{\bu}\bu p_{k-1}(\bu)=(-m-2k+2)p_{k-1}(\bu),$ where $p_{k-1}(\bu)\in \Mkk(\mathcal{C}l_m)$. Hence, we have
\begin{eqnarray}\label{Almansi}
\mathcal{H}_k(\mathcal{C}l_m)=\mathcal{M}_k(\mathcal{C}l_m)\oplus \bu\mathcal{M}_{k-1}(\mathcal{C}l_m),\ h_k=p_k+\bu p_{k-1}.
\end{eqnarray}
This is called an \emph{Almansi-Fischer decomposition} of $\HK(\mathcal{C}l_m)$ \cite{Dunkl}. In this decomposition, we have $P_k^+$ and $P_k^-$ as the projection maps 
\begin{eqnarray*}
&&P_k^+=1+\frac{\bu D_{\bu}}{m+2k-2}:\ \mathcal{H}_k(\mathcal{C}l_m)\longrightarrow \mathcal{M}_k(\mathcal{C}l_m), \label{Pk+}\\
&&P_k^-=I-P_k^+=\frac{-\bu D_{\bu}}{m+2k-2}:\ \mathcal{H}_k(\mathcal{C}l_m)\longrightarrow \bu\mathcal{M}_{k-1}(\mathcal{C}l_m) \label{Pk-}.
\end{eqnarray*}
Suppose $\Omega$ is a domain in $\mathbb{R}^m$. Consider a differentiable function $f: \Omega\times \mathbb{R}^m\longrightarrow \mathcal{C}l_m$
such that, for each $\bx\in \Omega$, $f(\bx,\bu)$ is a left monogenic polynomial homogeneous of degree $k$ in $\bu$. Then \textbf{the Rarita-Schwinger operator} \cite{Bures,Dunkl} is defined by 
 $$R_k=P_k^+D_{\bx}:\ C^{\infty}(\Rm,\Mk(\mathcal{C}l_m))\longrightarrow C^{\infty}(\Rm,\Mk(\mathcal{C}l_m)).$$
 We also need the following three Rarita-Schwinger type operators.
 \begin{eqnarray*}
	&&\text{\textbf{The twistor operator:}}\\
	&& T_k=P_k^+D_{\bx}:\ C^{\infty}(\Rm,\bu\Mkk(\mathcal{C}l_m))\longrightarrow C^{\infty}(\Rm,\Mk(\mathcal{C}l_m)),\\
	&&\text{\textbf{The dual twistor operator:}}\\ 
	&&T_k^*=P_k^-D_{\bx}:\ C^{\infty}(\Rm,\Mk(\mathcal{C}l_m))\longrightarrow C^{\infty}(\Rm,\bu\Mkk(\mathcal{C}l_m)),\\
	&&\text{\textbf{The remaining operator:}}\\
	&& Q_k=P_k^-D_{\bx}:\ C^{\infty}(\Rm,\bu\Mkk(\mathcal{C}l_m))\longrightarrow C^{\infty}(\Rm,\bu\Mkk(\mathcal{C}l_m)).
	\end{eqnarray*}
	More details can be found in \cite{Bures,Dunkl}.
\subsection{The generalized Maxwell operators}
In \cite{Eelbode}, the authors constructed the generalized Maxwell operators as a type of second order conformally invariant differential operators on particular function spaces. It was also pointed out that these differential operators reduced to the classical source-free Maxwell equations in the Minkowski space. Here, we will show the details that how to obtain the generalized Maxwell operators from the classical source-free Maxwell equations. This gives us the motivation to study these particular type of second order differential operators.
\par 
Recall that the classical source-free coupled Maxwell equations are given by
\be
&&\nabla\cdot\bs{E}=0,\ \nabla\times\bs{E}=-\frac{\partial{\bs{B}}}{\partial t},\\
&&
\nabla\cdot\bs{B}=0,\ \nabla\times\bs{B}=\mu_0\epsilon_0\frac{\partial\bs{E}}{\partial t},
\ee
Where $\bs{E}$ stands for the electric field, $\bs{B}$ stands for the magnetic field, $\bs{B}$ and $\bs{E}$ are both vector fields in $\mathbb{R}^3$. $\mu_0$ is the permeability of free space and $\epsilon_0$ is the permittivity of free space. 
\par
Since $\nabla\cdot\bs{B}=0$, we can define $\bs{B}$ in terms of a vector potential $\bs{C}$ as $\bs{B}=\nabla\times\bs{C}$. From Maxwell Faraday's equation, one obtains that $\nabla\times(\bs{E}+\frac{\partial\bs{C}}{\partial t})=0$. This means that $\bs{E}+\frac{\partial\bs{C}}{\partial t}$ can be written as the gradient of some scalar function, namely, a scalar potential $\Phi$. Hence, one has
\begin{eqnarray}\label{Electric}
\bs{E}=-\nabla\Phi-\frac{\partial\bs{C}}{\partial t}
\end{eqnarray}
with a scalar potential $\Phi$. 
\par
Plugging $\bs{B}=\nabla\times\bs{C}$ and $\bs{E}=-\nabla\Phi-\frac{\partial\bs{C}}{\partial t}$ into $\nabla\cdot\bs{E}=0,\ \nabla\times\bs{B}=\mu_0\epsilon_0\frac{\partial\bs{E}}{\partial t}$, one has

\begin{eqnarray}\label{newmax}
&&\nabla^2\Phi+\frac{\partial}{\partial t}(\nabla\cdot\bs{C})=0,\nonumber\\
&&\nabla^2\bs{C}-\mu_0\epsilon_0\frac{\partial^2\bs{C}}{\partial^2 t}-\nabla(\nabla\cdot\bs{C}+\mu_0\epsilon_0\frac{\partial\Phi}{\partial t})=0.
\end{eqnarray}
Next, we choose a set of potentials $(\bs{C},\Phi)$ to satisfy the Lorenz condition $\nabla\cdot \bs{C}+\mu_0\epsilon_0\frac{\partial\Phi}{\partial t}=0$, which uncouples the pair of equations given in (\ref{newmax}). Now one has
\be
\nabla^2\Phi-\mu_0\epsilon_0\frac{\partial^2\Phi}{\partial t^2}=0,\ \nabla^2\bs{C}-\mu_0\epsilon_0\frac{\partial^2\bs{C}}{\partial t^2}=0.
\ee
The potentials $\bs{C}$ and $\Phi$ form a 4-vector potential $\bs{C}^{\alpha}=(\Phi,\bs{C})$. Then the two equations above and the Lorenz condition becomes
\be
\square C^{\alpha}=0,\ \partial_{\alpha}A^{\alpha}=0,
\ee
where $\square=\nabla^2-\mu_0\epsilon_0\frac{\partial^2}{\partial t^2}$ is the d'Alembert operator. For $\bs{E}=-\nabla\Phi-\frac{\partial\bs{C}}{\partial t}$ and $\bs{B}=\nabla\times\bs{C}$, if the space coordinate is given by $(x,y,z)$, the $x$ components of $\bs{E}$ and $\bs{B}$ can be written explicitly as
\be
&&E_x=-\mu_o\epsilon_0\frac{\partial C_x}{\partial t}-\frac{\partial \Phi}{\partial x}=-(\partial^0C^1-\partial^1C^0),\\
&&B_x=\frac{\partial C_z}{\partial y}-\frac{\partial C_y}{\partial z}=-(\partial^2C^3-\partial^3C^2),
\ee
where $\partial^{\alpha}=(\frac{\partial}{\partial t},-\nabla)$. These equations imply that the electric and magnetic fields are the elements of a second rank, antisymmetric field-strength tensor $F^{\alpha\beta}=\partial^{\alpha}C^{\beta}-\partial^{\beta}C^{\alpha}$.
\par
This leads to a different formulation of Maxwell equations in terms of a differential 2-form $F^{\alpha\beta}$, known as the Maxwell-Faraday tensor, $\partial_{\alpha}F^{\alpha\beta}=0$, which can also be rewritten as
\be
\square C^{\beta}-\partial^{\beta}\partial_{\alpha}C^{\alpha}=0,
\ee
where $C^{\alpha}=(\Phi,\bs{C})$, $\partial^{\alpha}=(\partial_t,-\nabla)$ and $\partial_{\alpha}=(\partial_t,\nabla)$ and 
\[
F^{\alpha\beta}
=
\begin{pmatrix}
    0 & E_x & E_y &  E_z \\
    -E_x & 0 & -B_z &  B_y \\
    -E_y & B_z & 0 & -B_x \\
   -E_z & -B_y & B_x  & 0
\end{pmatrix}.
\]
The equation $\square C^{\beta}-\partial_{\beta}\partial^{\alpha}C^{\alpha}=0$ in the Minkowski space has a generalization to the $m$-dimensional Euclidean space given below.
\be
\Delta_{\bs{x}}\bs{f_s}(\bs{x})-\frac{4}{m}\sum_{j=1}^{m}\partial_{x_s}\partial_{x_j}\bs{f_j}(\bs{x})=0,\ 1\leq s\leq m.
\ee
where $f_s(\bx)$ is a vector valued function, with $\bx\in\R^m$. The constant $4/m$ allows the generalized operator above to preserve the conformal invariance property of the Maxwell equations. Notice that the space of real-valued homogeneous of harmonic polynomials with degree-$1$ with respect to a variable $\bu\in\R^m$ , denoted by $\mathcal{H}_1$ , is spanned by $\{u_1,\cdots,u_m\}$. The $m$ equations above can be replaced by one equation
\be
\sum_{s=1}^{m}u_s\bigg(\Delta_{\bs{x}}\bs{f_s}(\bs{x})-\frac{4}{m}\sum_{j=1}^{m}\partial_{x_s}\partial_{x_j}\bs{f_j}(\bs{x})\bigg)=0.
\ee
This equation can also be rewritten as
\be
0&=&\sum_{s=1}^m\Delta_{\bx}u_sf_s(\bx)-\frac{4}{m}\sum_{j,s=1}^mu_s\partial_{x_s}\partial_{x_j}f_j(\bx)\\
&=&\sum_{s=1}^m\Delta_{\bx}u_sf_s(\bx)-\frac{4}{m}\sum_{j,s,k=1}^mu_s\partial_{x_s}\partial_{x_j}\partial_{u_j}u_kf_k(\bx).
\ee
Now, we consider $f(\bx,\bu):=\sum_{s=1}^mu_sf_s(\bx)$ as a $\mathcal{H}_1(\R)$-valued function on $\R^m$. With the help of the Dirac operator $\bs{D}_{\bx}=\sum_{s=1}^m\bs{e}_s\partial_{x_s}$ in the $m$-dimensional Euclidean space. This equation can be written in a compact form
\be
\left(\Delta_{\bs{x}}-\frac{4}{m}\langle \bs{u},\bs{D}_{\bs{x}}\rangle\langle \bs{D}_{\bs{u}},\bs{D}_{\bs{x}}\rangle\right)\bs{f}(\bs{x},\bs{u})=0,
\ee
where $\langle\ ,\ \rangle$ is the standard inner product in Euclidean space.
The operator $\Delta_{\bs{x}}-\frac{4}{m}\langle \bs{u},\bs{D}_{\bs{x}}\rangle\langle \bs{D}_{\bs{u}},\bs{D}_{\bs{x}}\rangle$, denoted by $\mathcal{D}_1$ is called the generalized Maxwell operator, which is a second order conformally invariant differential operator in the higher spin spaces in Clifford analysis. It was firstly constructed by Eelbode et. al. in \cite{Eelbode}.
\par
More generally, if we consider a function $f(\bx,\bu)\in C^{\infty}(\R^m,\HK(\R))$, i.e., for a fixed $\bx\in\R^m$, $f(\bx,\bu)\in\HK(\R)$ with respect to $\bu$. The second order conformally invariant differential operators, named as bosonic Laplacians (also known as the higher spin Laplace operators \cite{Eelbode}), are defined as 
\begin{eqnarray}\label{Dtwo}
&&\Dtwo:\ C^{\infty}(\R^m,\HK(\R))\longrightarrow C^{\infty}(\R^m,\HK(\R)),\nonumber\\
&&\Dtwo=\Delta_{\bx}-\frac{4\langle \bu,D_{\bx}\rangle\langle D_{\bu},D_{\bx}\rangle}{m+2k-2}+\frac{4|u|^2\langle D_{\bu},D_{\bx}\rangle^2}{(m+2k-2)(m+2k-4)},
\end{eqnarray}
where $\langle\ ,\ \rangle$ is the standard inner product in $\R^m$. One can easily see that $\Dtwo$ reduces to the generalized Maxwell operator when $k=1$.

 \section{Poisson's equation and representation formula}
In this section, we will review some properties of the Rarita-Schwinger type operators and bosonic Laplacians from \cite{DeBie,DR,Dunkl,Li}, which will be needed for solving a Poisson's equation in the higher spin spaces.
\par
 Let $Z_k^1(\bu,\bv)$ be the reproducing kernel for $\Mk(\mathcal{C}l_m)$, which satisfies
\begin{eqnarray*}
f(\bv)=\int\displaylimits_{\Smone}Z_k^1(\bu,\bv)f(\bu)dS(\bu),\ for\ all\ f(\bv)\in\Mk(\mathcal{C}l_m).
\end{eqnarray*}
Then the fundamental solution for $R_k$ (Section $5.1$, \cite{Bures}) is 
\begin{eqnarray*}
E_k(\bx,\by,\bu,\bv)=\displaystyle\frac{m+2k-2}{(m-2)\omega_{m}}\frac{\by-\bx}{|\by-\bx|^m}Z_k^1\bigg(\frac{(\by-\bx)\bu(\by-\bx)}{|\by-\bx|^2},\bv\bigg).
\end{eqnarray*}
Similarly, we have the fundamental solution for $Q_k$ (Section $3$, \cite{Li}) as follows.
\begin{eqnarray*}
F_k(\bx,\by,\bu,\bv)=\displaystyle\frac{m+2k-2}{(2-m)\omega_{m}}\bu\frac{\by-\bx}{|\by-\bx|^m}Z_{k-1}^1\bigg(\frac{(\by-\bx)\bu(\by-\bx)}{|\by-\bx|^2},\bv\bigg)\bv.
\end{eqnarray*}
We also have that
\begin{theorem}[Theorem 10, \cite{Dunkl}]
Let $f\in C_c^{\infty}(\R^m,\mathcal{M}_k(\mathcal{C}l_m))$, we have
\be
R_k\int\displaylimits_{\R^m}\int\displaylimits_{\Smone}E_k(\bx,\by,\bu,\bv)f(\bx,\bu)dS(\bu)d\bx=f(\by,\bv),
\ee
\end{theorem}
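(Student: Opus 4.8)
The plan is to recognize $E_k$ as the fundamental solution of $R_k$ and then collapse the convolution. Write $\Phi(\by,\bv)$ for the double integral. Since $E_k(\bx,\by,\bu,\bv)$ depends on $\bx,\by$ only through $\z:=\by-\bx$, is homogeneous of degree $1-m$ in $\z$, and hence has the locally integrable singularity $|\z|^{1-m}$, while $f\in C_c^\infty(\Rm,\Mk(\Clm))$, the substitution $\bx\mapsto\by-\z$ exhibits $\Phi$ as a convolution in the variable $\z$ in which the $\by$-derivatives may be transferred onto $f$; thus $\Phi\in C^\infty$ and $D_{\by}\Phi$ equals the distributional convolution $(D_{\by}E_k)*f$, so that $R_k\Phi=P_k^+\big[(D_{\by}E_k)*f\big]$. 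Everything therefore reduces to proving
\[
R_k^{(\by,\bv)}E_k(\bx,\by,\bu,\bv)=Z_k^1(\bu,\bv)\,\delta(\by-\bx)
\]
as distributions in $\by$; granting this and using that $f(\by,\cdot)\in\Mk(\Clm)$ together with the reproducing property of $Z_k^1$, one obtains $R_k\Phi(\by,\bv)=\int_{\Smone}Z_k^1(\bu,\bv)f(\by,\bu)\,dS(\bu)=f(\by,\bv)$, as desired.

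To prove the displayed identity I would first show $R_k^{(\by,\bv)}E_k=0$ for $\by\ne\bx$. Writing $\omega=\z/|\z|\in\Smone$, one has $\z\bu\z/|\z|^2=\omega\bu\omega$, so $E_k$ is, up to the constant $\tfrac{m+2k-2}{(m-2)\omega_{m}}$, the Euclidean Cauchy kernel $\z|\z|^{-m}$ times $Z_k^1$ evaluated at the reflected point $\omega\bu\omega$. Since $D_{\z}(\z|\z|^{-m})=0$ for $\z\ne0$ and $Z_k^1(\cdot,\bv)\in\Mk(\Clm)$, a direct computation with the chain rule (carried out in Section~5.1 of \cite{Bures}, where precisely this property identifies $E_k$ as the fundamental solution of $R_k$) shows that for $\z\ne0$ the function $D_{\z}E_k$ takes values, as a function of $\bv$, in the summand $\bv\Mkk(\Clm)$ of the Almansi--Fischer decomposition, which $P_k^+$ annihilates; hence $P_k^+D_{\z}E_k=0$ off the diagonal. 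Consequently $R_k^{(\by,\bv)}E_k$ is a distribution in $\z$ supported at $\z=0$; since $E_k$ is homogeneous of degree $1-m$ in $\z$ and $P_k^+$ commutes with dilations in $\z$, this distribution is homogeneous of degree $-m$, and a distribution on $\Rm$ supported at the origin and homogeneous of degree $-m$ must be of the form $A(\bu,\bv)\,\delta(\by-\bx)$ with $A$ Clifford-valued (the derivatives $\partial^{\alpha}\delta$ are homogeneous of degree $-m-|\alpha|$, so only $\delta$ itself survives).

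It remains to identify $A(\bu,\bv)=Z_k^1(\bu,\bv)$. Pairing $R_k^{(\by,\bv)}E_k$ with a test function and excising the ball $|\z|<\varepsilon$, the bulk term vanishes by the off-diagonal identity just proved, and a Clifford--Stokes integration over the complement leaves a boundary integral over $|\z|=\varepsilon$; scaling $\z=\varepsilon\omega$ and using $E_k(\varepsilon\omega,\cdot,\cdot)=\tfrac{m+2k-2}{(m-2)\omega_{m}}\varepsilon^{1-m}\omega\,Z_k^1(\omega\bu\omega,\bv)$, $dS=\varepsilon^{m-1}\,dS(\omega)$ and $\omega^2=-1$, the powers of $\varepsilon$ cancel and the limit $\varepsilon\to0$ yields $A(\bu,\bv)=\tfrac{m+2k-2}{(m-2)\omega_{m}}\,P_k^+\!\int_{\Smone}Z_k^1(\omega\bu\omega,\bv)\,dS(\omega)$. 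I expect the proof that this equals $Z_k^1(\bu,\bv)$ to be the main obstacle: it is exactly the spherical-average computation for the zonal monogenic kernel that pins down the normalizing constant $\tfrac{m+2k-2}{m-2}$ in the formula for $E_k$, and it rests on the explicit Gegenbauer-type expression for $Z_k^1$ and its equivariance under the orthogonal group acting through $\bu\mapsto\omega\bu\omega$. Once this identity (which is the analytic heart of the fundamental-solution construction in \cite{Bures}) is secured, the differentiation under the integral, the excision bookkeeping, and the fact that $P_k^+$ removes the principal-value part of $(D_{\by}E_k)*f$ are all routine.
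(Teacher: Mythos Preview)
The paper does not give its own proof of this statement: it is quoted verbatim as Theorem~10 of \cite{Dunkl} and used as a black box (together with the companion result for $Q_k$ from \cite{Li}) in the proof of Theorem~\ref{PoissonEqn}. There is therefore no ``paper's own proof'' to compare your attempt against.

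That said, your outline is the standard fundamental-solution argument and is essentially correct in structure: write the integral as a convolution, show $R_k^{(\by,\bv)}E_k=0$ off the diagonal (which is exactly what is established in \cite[Section~5.1]{Bures}), use homogeneity to reduce the distribution supported at $\z=0$ to a multiple of $\delta$, and identify the coefficient by the excision/Stokes computation on the sphere $|\z|=\varepsilon$. You are right that the identification $A(\bu,\bv)=Z_k^1(\bu,\bv)$ is where the specific normalizing constant $\tfrac{m+2k-2}{m-2}$ in $E_k$ enters; in \cite{Dunkl} this is handled not by your proposed spherical-average of $Z_k^1(\omega\bu\omega,\bv)$ but via the Stokes-type theorem for $R_k$ and the reproducing property of $Z_k^1$ directly, which is somewhat cleaner. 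The one phrase to tighten is ``$P_k^+$ removes the principal-value part of $(D_{\by}E_k)*f$'': more precisely, since $P_k^+D_{\by}E_k$ is already supported at the origin as a distribution, no principal value ever arises once you apply $P_k^+$ before pairing.
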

and
\begin{theorem}[Theorem 6, \cite{Li}]
Let $f\in C_c^{\infty}(\R^m,\bu\mathcal{M}_{k-1}(\mathcal{C}l_m))$, we have
\be
Q_k\int\displaylimits_{\R^m}\int\displaylimits_{\Smone}F_k(\bx,\by,\bu,\bv)f(\bx,\bu)dS(\bu)d\bx=f(\by,\bv).
\ee
\end{theorem}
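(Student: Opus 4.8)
The statement says that $F_k$ is, up to the displayed constant, a fundamental solution of $Q_k$, and I would prove it by the parametrix scheme already used for $R_k$ in the theorem just above and for the companion operators in \cite{Bures,Dunkl,Li}; the only $Q_k$-specific inputs are the monogenicity and reproducing property of $Z_{k-1}^1$ on $\Mkk(\Clm)$ and the fact that $F_k$ is built from the Dirac kernel $\tfrac{\by-\bx}{|\by-\bx|^m}$ and the kernel $Z_{k-1}^1$ evaluated at the reflected argument $\tfrac{(\by-\bx)\bu(\by-\bx)}{|\by-\bx|^2}$. The first step is to check that, for each fixed $\bx$, the map $(\by,\bv)\mapsto F_k(\bx,\by,\bu,\bv)$ is a null solution of $Q_k$ on $\R^m\setminus\{\bx\}$: since $\tfrac{\by-\bx}{|\by-\bx|^m}$ is left monogenic in $\by$ for $\by\ne\bx$, a chain-rule computation for the reflected argument together with the monogenicity of $Z_{k-1}^1$ shows that $D_{\by}F_k$, which lies in $\HK(\Clm)$ as a function of $\bv$, has vanishing $\bv\Mkk(\Clm)$-component, i.e. $P_k^-D_{\by}F_k=0$; conceptually this is forced by the conformal (Kelvin-type) covariance built into $F_k$.

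Next, fix $f\in C_c^\infty(\R^m,\bu\Mkk(\Clm))$ and set $\Phi(\by,\bv)=\int_{\R^m}\int_{\Smone}F_k(\bx,\by,\bu,\bv)f(\bx,\bu)\,dS(\bu)\,d\bx$; since $|F_k|=O(|\bx-\by|^{1-m})\in L^1_{\mathrm{loc}}(\R^m)$ this is a well-defined $C^\infty$ function, the limit of its truncations $\Phi_\varepsilon$ to $\{|\bx-\by|>\varepsilon\}$. Writing $Q_k=P_k^-D_{\by}$ with $P_k^-=\tfrac{-\bv D_{\bv}}{m+2k-2}$ acting only in $\bv$ — so that it commutes with $\int d\bx$ and passes through the right multiplication by $f(\bx,\bu)$ — I would apply Stokes' theorem for the Dirac operator $D_{\by}$ on $\R^m\setminus B_\varepsilon(\by)$: the interior term drops out by the first step, leaving, exactly as for the $R_k$-identity, $Q_k\Phi(\by,\bv)=\lim_{\varepsilon\to0}P_k^-\!\int_{\partial B_\varepsilon(\by)}\int_{\Smone}\bnu(\bx)\,F_k(\bx,\by,\bu,\bv)\,f(\bx,\bu)\,dS(\bu)\,d\sigma(\bx)$, where $\bnu(\bx)=(\by-\bx)/\varepsilon$.

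It remains to evaluate the boundary limit. Parametrising $\partial B_\varepsilon(\by)=\{\by+\varepsilon\bw:\bw\in\Smone\}$ gives $\bnu=-\bw$, $\tfrac{\by-\bx}{|\by-\bx|^m}=-\varepsilon^{1-m}\bw$, reflected argument $\bw\bu\bw$, and $d\sigma=\varepsilon^{m-1}dS(\bw)$, so every power of $\varepsilon$ cancels and $f(\by+\varepsilon\bw,\bu)\to f(\by,\bu)$; the limit reduces to $\tfrac{m+2k-2}{(2-m)\omega_m}\,P_k^-\!\int_{\Smone}\!\int_{\Smone}\bw\bu\bw\,Z_{k-1}^1(\bw\bu\bw,\bv)\,\bv\,f(\by,\bu)\,dS(\bu)\,dS(\bw)$. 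Integrating $\bw$ out using the covariance of $Z_{k-1}^1$ under the reflection $\bu\mapsto\bw\bu\bw$ (which produces the factor carrying $\omega_m$ and $m+2k-2$) and then using that $Z_{k-1}^1$ reproduces $\Mkk(\Clm)$ in its $\bu$-slot collapses the double spherical integral to $f(\by,\bv)$, the normalisation $\tfrac{m+2k-2}{(2-m)\omega_m}$ being precisely what makes the overall constant $1$.

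The crux is this last step: the double integral over $\Smone\times\Smone$ with the reflected argument, the interplay of $P_k^-$ with the $Z_{k-1}^1$-kernel, and the exact bookkeeping of the constants $m+2k-2$, $2-m$, $\omega_m$. A reduction one might hope shortens this: comparing the kernels gives $F_k(\bx,\by,\bu,\bv)=-\tfrac{m+2k-2}{m+2k-4}\,\bu\,E_{k-1}(\bx,\by,\bu,\bv)\,\bv$, so the statement should follow from the $R_k$-identity above with $k$ replaced by $k-1$, after identifying $Q_k$ with a scalar multiple of $R_{k-1}$ under the left-/right-multiplication isomorphisms $\Mkk(\Clm)\leftrightarrow\bu\Mkk(\Clm)$ (which rest on the Almansi--Fischer decomposition of $\HK(\Clm)$ together with $\bu^2=-|u|^2$, $D_{\bu}(\bu p)=-(m+2k-2)p$ for $p\in\Mkk(\Clm)$, and $e_j\bu=-\bu e_j-2u_j$). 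But the $\bv$-factor sits to the right of $Z_{k-1}^1$ and does not commute past $f$, so this route still needs the reflection identities for $Z_{k-1}^1$ and in practice reduces to the same computation as the previous step. The first two steps are routine once one keeps careful track of the order of Clifford factors.
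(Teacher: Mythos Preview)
The paper does not give its own proof of this statement; it is quoted without proof as a preliminary result, attributed to \cite{Li} (their Theorem~6). There is therefore nothing in the present paper to compare your argument against.

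For what it is worth, your sketch follows the standard parametrix scheme used for the companion $R_k$-identity in \cite{Dunkl}: verify $Q_kF_k=0$ off the diagonal, excise a ball $B_\varepsilon(\by)$, apply the Dirac--Stokes formula, and evaluate the surviving spherical boundary integral via the reproducing property of $Z_{k-1}^1$. This is the natural route and is presumably close to what \cite{Li} does. Your observed relation $F_k=-\tfrac{m+2k-2}{m+2k-4}\,\bu\,E_{k-1}\,\bv$ is correct, and your caution that the trailing $\bv$ blocks a naive reduction to the $R_{k-1}$-statement is well taken. The one place where your write-up is a bit loose is the first step: you assert that $P_k^-D_{\by}F_k=0$ ``by conformal covariance,'' but in practice one must either invoke the explicit intertwining identity for $Q_k$ under inversion (as in \cite{Li}) or carry out the chain-rule computation for the reflected argument; this is routine but not literally contained in what you wrote.
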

Recall that (Proposition 1, \cite{DR}) the connection between bosonic Laplacians and the Rarita-Schwinger type operators is given by
\begin{align*}
&\Dtwo =-R_k^2P_k^++\frac{2R_kT_kP_k^-}{m+2k-4}-\frac{2Q_kT_k^*P_k^+}{m+2k-4}-\frac{(m+2k)Q_k^2P_k^-}{m+2k-4},\\
=&R_k\bigg[-R_kP_k^++\frac{2T_kP_k^-}{m+2k-4}\bigg]+Q_k\bigg[-\frac{2T_k^*P_k^+}{m+2k-4}-\frac{(m+2k)Q_kP_k^-}{m+2k-4}\bigg].
\end{align*}
We let $A_k=-R_kP_k^++\displaystyle\frac{2T_kP_k^-}{m+2k-4}$ and $B_k=-\displaystyle\frac{2T_k^*P_k^+}{m+2k-4}-\displaystyle\frac{(m+2k)Q_kP_k^-}{m+2k-4}$ for convenience, then we have
\begin{eqnarray}\label{connection}
\Dtwo=R_kA_k+Q_kB_k.
\end{eqnarray}
Now, we can solve a Poisson's equation for bosonic Laplacian $\Dtwo$ as follows.
\begin{theorem}[Solving Poisson's equation]\label{PoissonEqn}
Let $f\in C^2_c(\R^m\times\Bm,\HK(\mathbb{R}))$, that is $f\in C^2(\R^m\times\Bm,\HK(\mathbb{R}))$ and $f$ has compact support with respect to $\bx$, and set
\begin{eqnarray}
\Phi(\by,\bv)=\int\displaylimits_{\R^m}\int\displaylimits_{\Smone}H_k(\bx,\by,\bu,\bv)f(\bx,\bu)dS(\bu)d\bx.
\end{eqnarray}
Then, we have
\begin{enumerate}
\item $\Phi\in C^2(\R^m\times\Bm,\HK(\mathbb{R}))$,
\item $\Dtwo\Phi=f\ \text{in}\ \R^m\times\Bm$.
\end{enumerate}
\end{theorem}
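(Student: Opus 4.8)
The plan is to realize $\Phi$ as a Newtonian–potential–type integral against a fundamental solution $H_k$ of $\Dtwo$, built from the fundamental solutions $E_k$ of $R_k$ and $F_k$ of $Q_k$ together with the factorization \eqref{connection}, and then to prove the two assertions by a potential–theoretic regularity argument and by the fundamental–solution identities of Dunkl and Li recalled above. First I would pin down $H_k$: split $f = f^{+}+f^{-}$ with $f^{\pm}=P_k^{\pm}f$, so that $f^{+}\in C^{2}_{c}(\Rm\times\Bm,\Mk(\Clm))$ and $f^{-}\in C^{2}_{c}(\Rm\times\Bm,\bu\Mkk(\Clm))$ by \eqref{Almansi}. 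Since $R_kA_k\Phi$ is $\Mk(\Clm)$–valued while $Q_kB_k\Phi$ is $\bu\Mkk(\Clm)$–valued, applying $P_k^{\pm}$ to the target equation $\Dtwo\Phi=f$ and using \eqref{connection} splits it into $R_k(A_k\Phi)=f^{+}$ and $Q_k(B_k\Phi)=f^{-}$. By the Dunkl and Li theorems it then suffices to choose $H_k$ (acting in the $(\by,\bv)$ variables) so that $A_k\int_{\Rm}\int_{\Smone}H_kf=\int_{\Rm}\int_{\Smone}E_kf^{+}$ and $B_k\int_{\Rm}\int_{\Smone}H_kf=\int_{\Rm}\int_{\Smone}F_kf^{-}$; concretely this amounts to inverting the first–order system $(A_k,B_k)$ on $E_k$ and $F_k$, using the algebraic relations among $R_k,Q_k,T_k,T_k^{*}$, and it produces an explicit kernel $H_k(\bx,\by,\bu,\bv)$ that is polynomial (hence $\HK(\R)$–valued) in $\bv$ and has a diagonal singularity of order $|\bx-\by|^{2-m}$, matching $E_k,F_k\sim|\bx-\by|^{1-m}$.

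Granting such an $H_k$, assertion (2) is short. As $f$ has compact support in $\bx$ and a first $\by$–derivative of $H_k$ is of size $|\bx-\by|^{1-m}$, which is locally integrable on $\Rm$, the first–order operators $A_k$ and $B_k$ may be taken under the integral, so that $A_k\Phi=\int_{\Rm}\int_{\Smone}E_kf^{+}$ and $B_k\Phi=\int_{\Rm}\int_{\Smone}F_kf^{-}$ by the defining property of $H_k$. Applying $R_k$ and $Q_k$ and invoking the theorems of Dunkl and of Li yields $R_kA_k\Phi=f^{+}$ and $Q_kB_k\Phi=f^{-}$; adding these and using \eqref{connection} gives $\Dtwo\Phi=f$ on $\Rm\times\Bm$, and the output is $\HK(\R)$–valued since $\Dtwo$ preserves $C^{\infty}(\Rm,\HK(\R))$, see \eqref{Dtwo}.

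I expect the main obstacle to lie in assertion (1), the $C^{2}$ regularity of $\Phi$: the $\bv$–dependence is harmless, being polynomial, but the singularity of $H_k$ at $\bx=\by$ makes the $\by$–derivatives delicate. Continuity of $\Phi$, and the interchange of a first $\by$–derivative with the integral, are immediate because a first derivative of $H_k$ is $\sim|\bx-\by|^{1-m}$, locally integrable; a second $\by$–derivative of $H_k$, however, is $\sim|\bx-\by|^{-m}$, which is \emph{not} locally integrable, so for the second derivatives one must proceed as in the classical treatment of the Newtonian potential: differentiate $H_k$ once under the integral, integrate by parts over $\Rm\setminus B_{\varepsilon}(\by)$ to move the remaining derivative onto $f$, and let $\varepsilon\to0$, with the boundary integrals over $\partial B_{\varepsilon}(\by)$ disappearing by the homogeneity of the kernel; continuity in $(\by,\bv)$ of the limiting expression then gives $\Phi\in C^{2}$ and simultaneously justifies interchanging $\Dtwo^{(\by)}$ with the integral used in (2). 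The other genuinely nontrivial point is the construction of $H_k$ itself — performing the inversion of $(A_k,B_k)$ precisely enough that the factorization \eqref{connection} returns exactly $f$, with no lower–order remainder.
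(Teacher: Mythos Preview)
Your argument for assertion (2) matches the paper's: decompose $f=P_k^+f+P_k^-f$, use the factorization $\Dtwo=R_kA_k+Q_kB_k$ together with the orthogonality of $\Mk$ and $\bu\Mkk$ over the sphere, and invoke the Dunkl and Li theorems to recover $P_k^+f$ and $P_k^-f$ separately. You do more work than needed in ``constructing'' $H_k$, however: the kernel is the explicit fundamental solution $H_k(\bx,\by,\bu,\bv)=c_{m,k}|\by-\bx|^{2-m}Z_k\bigl(\tfrac{(\by-\bx)\bu(\by-\bx)}{|\by-\bx|^2},\bv\bigr)$ of De~Bie et al., and Proposition~2 of \cite{DR} already supplies $R_kA_kH_k=R_kE_k$ and $Q_kB_kH_k=Q_kF_k$ distributionally, so no inversion of the pair $(A_k,B_k)$ is required.

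Where you genuinely diverge is in assertion (1). You flag the $C^2$ regularity as ``the main obstacle'' and propose the classical Newtonian--potential route: differentiate once under the integral, then integrate by parts on $\Rm\setminus B_\varepsilon(\by)$ to cope with the non-integrable $|\bx-\by|^{-m}$ singularity of the second derivatives of $H_k$. That would work, but the paper bypasses it entirely by exploiting translation invariance: since $H_k$ depends only on $\by-\bx$, the substitution $\bx\mapsto\by-\bx$ gives
\[
\Phi(\by,\bv)=\int_{\Rm}\int_{\Smone}H_k(\bx,0,\bu,\bv)\,f(\by-\bx,\bu)\,dS(\bu)\,d\bx,
\]
so all $\by$--dependence sits in $f$. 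Because $f\in C_c^2$, its difference quotients converge uniformly, and one reads off $\Phi\in C^2$ directly---no $\varepsilon$--excision, no boundary terms. In short, what you identify as the hard step dissolves with a change of variables; your route is valid but needlessly laborious.
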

\begin{proof}
1.Firstly, we notice that $Z_k\bigg(\displaystyle\frac{\bx\bu\bx}{|\bx|^2},\bv\bigg)$ is a $k$-homogeneous harmonic polynomial with respect to $\bv$, so $\Phi$ is harmonic with respect to $\bv$. Further, we have
\be
\Phi(\by,\bv)&=&\int\displaylimits_{\R^m}\int\displaylimits_{\Smone}H_k(\bx,\by,\bu,\bv)f(\bx,\bu)dS(\bu)d\bx\\
&=&\int\displaylimits_{\R^m}\int\displaylimits_{\Smone}H_k(\bx,0,\bu,\bv)f(\by-\bx,\bu)dS(\bu)d\bx,
\ee
hence,
\be
&&\frac{\Phi(\by+h\bs{e}_j,\bv)-\Phi(\by,\bv)}{h}\\
&=&\int\displaylimits_{\R^m}\int\displaylimits_{\Smone}H_k(\bx,0,\bu,\bv)\frac{f(\by-\bx+h\bs{e}_j,\bu)-f(\by-\bx,\bu)}{h}dS(\bu)d\bx,
\ee
where $h\neq 0$ and $\bs{e}_j=(0,\cdots,1,\cdots,0)$ with $1$ in the $j^{th}$ spot. Notice that $f\in C^2_c(\R^m\times\Bm,\HK(\mathbb{R}))$, which tells us that
\be
\frac{f(\by-\bx+h\bs{e}_j,\bu)-f(\by-\bx,\bu)}{h}\rightarrow f_{y_j}(\by-\bx,\bu),\ h\rightarrow 0
\ee
uniformly. Hence, we have 
\be
\Phi_{y_j}(\by,\bv)=\int\displaylimits_{\R^m}\int\displaylimits_{\Smone}H_k(\bx,0,\bu,\bv)f_{y_j}(\by-\bx,\bu)dS(\bu)d\bx.
\ee
A similar argument can be applied to the second derivatives, which implies $\Phi\in C^2(\R^m\times\Bm,\HK(\mathbb{R}))$.
\par
2. We prove the second claim by applying the properties of the Rarita-Schwinger type operators and the connection between the Rarita-Schwinger type operators and bosonic Laplacians given in \eqref{connection}.
\par
Recall that $\Dtwo=R_kA_k+Q_kB_k$, and from Proposition 2 in \cite{DR}, we know that 
\be
&&R_kA_kH_k(\bx,\by,\bu,\bv)=R_kE_k(\bx,\by,\bu,\bv)=\delta(\by-\bx)Z_k^1(\bu,\bv),\\
&&Q_kB_kH_k(\bx,\by,\bu,\bv)=R_kE_k(\bx,\by,\bu,\bv)=\delta(\by-\bx)\bv Z_{k-1}^1(\bu,\bv)\bu,
\ee
in the distributional sense. Further, it also tells us that
\begin{align*}
&\Dtwo\int\displaylimits_{\R^m}\int\displaylimits_{\Smone}H_k(\bx,\by,\bu,\bv)f(\bx,\bu)dS(\bu)d\bx\\
=&(R_kA_k+Q_kB_k)\int\displaylimits_{\R^m}\int\displaylimits_{\Smone}H_k(\bx,\by,\bu,\bv)(P_k^++P_k^-)f(\bx,\bu)dS(\bu)d\bx\\
=&R_kA_k\int\displaylimits_{\R^m}\int\displaylimits_{\Smone}H_k(\bx,\by,\bu,\bv)P_k^+f(\bx,\bu)dS(\bu)d\bx\\
&+Q_kB_k\int\displaylimits_{\R^m}\int\displaylimits_{\Smone}H_k(\bx,\by,\bu,\bv)P_k^-f(\bx,\bu)dS(\bu)d\bx.
\end{align*}
The last equation comes from the fact that 
\be
R_kA_kH_k,P_k^+f\in\Mk(\mathcal{C}l_m);\ Q_kB_kH_k,P_k^-f\in\bu\Mkk(\Clm),
\ee
and two functions from different function spaces above are orthogonal to each other with respect to the integral over the unit sphere with respect to $\bu$ (Lemma $5$ in \cite{Dunkl}). Further, \cite[Theorem 10]{Dunkl} shows that 
\be
&&R_kA_k\int\displaylimits_{\R^m}\int\displaylimits_{\Smone}H_k(\bx,\by,\bu,\bv)P_k^+f(\bx,\bu)dS(\bu)d\bx\\
&=&\int\displaylimits_{\R^m}\int\displaylimits_{\Smone}R_kE_k(\bx,\by,\bu,\bv)P_k^+f(\bx,\bu)dS(\bu)d\bx=P^+_kf(\by,\bv),
\ee
and \cite[Theorem 6]{Li} gives us that
\be
Q_kB_k\int\displaylimits_{\R^m}\int\displaylimits_{\Smone}H_k(\bx,\by,\bu,\bv)P_k^-f(\bx,\bu)dS(\bu)d\bx=P^-_kf(\by,\bv).
\ee
Hence, one has 
\be
&&\Dtwo\int\displaylimits_{\R^m}\int\displaylimits_{\Smone}H_k(\bx,\by,\bu,\bv)f(\bx,\bu)dS(\bu)d\bx\\
&=&P_k^+f(\by,\bv)+P_k^-f(\by,\bv)=f(\by,\bv),
\ee
which completes the proof.
\end{proof}
With Theorem \ref{PoissonEqn} and the Liouville-type theorem given in \cite[Theorem 5.7]{DTR}, one can immediately have a representation formula as follows.
\begin{theorem}[Representation formula] Assume  $m>4$ and $f\in C^2_c(\R^m\times\Bm,\HK(\mathbb{R}))$. Then any bounded solution of $\Dtwo g=f$ in $\R^m\times\Bm$ has the form
\be
g(\by,\bv)=\int\displaylimits_{\R^m}\int\displaylimits_{\Smone}H_k(\bx,\by,\bu,\bv)f(\bx,\bu)dS(\bu)d\bx+h(\bv), \quad \by\in\R^m,\, \bv\in\Bm,
\ee
where $h\in\HK(\mathbb{R})$.
\end{theorem}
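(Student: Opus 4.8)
The plan is to strip off from an arbitrary bounded solution the explicit particular solution provided by Theorem~\ref{PoissonEqn}, and then to quote the Liouville-type theorem of \cite{DTR}. First I would set
\[
\Phi(\by,\bv):=\int\displaylimits_{\R^m}\int\displaylimits_{\Smone}H_k(\bx,\by,\bu,\bv)f(\bx,\bu)\,dS(\bu)\,d\bx,
\]
which by Theorem~\ref{PoissonEqn} lies in $C^2(\R^m\times\Bm,\HK(\mathbb{R}))$ and satisfies $\Dtwo\Phi=f$ on $\R^m\times\Bm$. If $g$ is any bounded solution of $\Dtwo g=f$ in $\R^m\times\Bm$, then $F:=g-\Phi$ is a $C^2$ null solution of the bosonic Laplacian, $\Dtwo F=0$, so the whole problem reduces to classifying such $F$.

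The key intermediate step is to verify that $\Phi$, and hence $F=g-\Phi$, is bounded on $\R^m\times\Bm$. Here I would use that $f$ is compactly supported in $\bx$, say with $\bx$-support in a ball $B(0,\rho)$, that $f$ is bounded, and that the zonal kernels $Z_k^1,Z_{k-1}^1$ are continuous on $\Smone\times\overline{\Bm}$; combined with the fact that the fundamental solution $H_k$ of the second order operator $\Dtwo$ — assembled in \cite{DR} out of the first order kernels $E_k$ and $F_k$ — is locally integrable in $\bx$ and decays like $|\bx-\by|^{2-m}\to 0$ at infinity because $m>2$, this makes the convolution defining $\Phi$ uniformly bounded in $\by\in\R^m$; boundedness in $\bv\in\Bm$ is automatic since $\Bm$ is bounded. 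Thus $F$ is a bounded $C^2$ null solution of $\Dtwo$ on $\R^m\times\Bm$.

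Finally I would invoke the Liouville-type theorem \cite[Theorem 5.7]{DTR}, valid under the standing hypothesis $m>4$: every bounded null solution of $\Dtwo$ on $\R^m\times\Bm$ is independent of $\bx$, i.e.\ equals $h(\bv)$ for some $h\in\HK(\mathbb{R})$. Applying this to $F=g-\Phi$ yields $g=\Phi+h$, which is exactly the asserted representation formula.

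I expect the only genuine obstacle to be the boundedness of $\Phi$: one must extract the decay estimate $|H_k(\bx,\by,\bu,\bv)|=O(|\bx-\by|^{2-m})$, uniformly in $(\bu,\bv)\in\Smone\times\overline{\Bm}$, from the construction of $H_k$ in \cite{DR}, and check integrability near the diagonal $\bx=\by$, before the compactly supported convolution estimate can be run. The remaining steps are formal — the identity $\Dtwo(g-\Phi)=0$ is immediate from Theorem~\ref{PoissonEqn}, and the classification of bounded null solutions, together with the precise role of the hypothesis $m>4$, is quoted directly from \cite{DTR}.
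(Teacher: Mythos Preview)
Your proposal is correct and follows essentially the same route as the paper: define $\Phi$ via the convolution with $H_k$, use Theorem~\ref{PoissonEqn} to get $\Dtwo\Phi=f$, argue that $\Phi$ is bounded, and then apply the Liouville-type theorem of \cite{DTR} to $g-\Phi$. The only cosmetic difference is that the paper justifies the boundedness of $\Phi$ in one line by invoking the decay $H_k\to 0$ as $|\bx-\by|\to\infty$ (which it attributes to $m>4$, though the explicit formula $H_k=c_{m,k}|\by-\bx|^{2-m}Z_k(\cdots)$ shows this already for $m>2$), whereas you spell out the compact-support/convolution estimate and place the genuine $m>4$ restriction where it actually enters, namely in the Liouville theorem.
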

\begin{proof}
Notice that $H_k(\bx,\by,\bu,\bv)\rightarrow 0$ when $|\bx-\by|\rightarrow \infty$ for $m>4$, hence, 
\be
\Phi(\by,\bv):=\int\displaylimits_{\R^m}\int\displaylimits_{\Smone}H_k(\bx,\by,\bu,\bv)f(\bx,\bu)dS(\bu)d\bx
\ee
is a bounded solution in $C^2(\R^m\times\Bm,\HK(\mathbb{R}))$ for $\Dtwo g=f$ in $\R^m\times\Bm$. If $\Phi'$ is another bounded solution in $\R^m\times\Bm$, then we have a bounded solution $\Phi-\Phi'$ for $\Dtwo g=0$ in $\R^m\times\Bm$. In accordance to the Liouville-type theorem, $\Phi-\Phi'=h(\bv)$ in $\R^m\times\Bm$, where $h(\bv)\in\HK(\mathbb{R})$, which completes the proof.
\end{proof}
In particular, if we let $g(\bx,\bu)$ above be the fundamental solution of $\Dtwo$ given by (Theorem $5.2$, \cite{DeBie})
\begin{eqnarray*}
H_k(\bx,\by,\bu,\bv)=c_{m,k}|\by-\bx|^{2-m}Z_k\bigg(\displaystyle\frac{(\by-\bx)\bu(\by-\bx)}{|\by-\bx|^2},v\bigg),
\end{eqnarray*}
and discuss the only singular point $\by$ with the technique applied in \cite[Theorem 6]{DR}, we can obtain a Green's integral formula as follows.
\begin{proposition}[Green's integral formula]
Let $\Omega\subset\R^m$ be an open domain and $f\in C^2(\Omega\times\Bm,\HK(\R))$. Then, we have
\be
f(\by,\bv)
&=&\int\displaylimits_{\partial\Omega}\int\displaylimits_{\Smone}(Af)(\bx,\bu)H_k(\bx,\by,\bu,\bv)\\
&&\quad\quad\quad-f(\bx,\bu)(AH_k)(\bx,\by,\bu,\bv)dS(\bu)d\sigma(\bx),
\ee
where $A$ is the operator given in Theorem \ref{Green}.
\end{proposition}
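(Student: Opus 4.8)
The plan is to transplant the classical derivation of Green's third identity to the higher spin setting: excise a small ball around the unique singularity of $H_k$, apply the Green's formula of Theorem~\ref{Green} on the punctured domain, and pass to the limit as the radius shrinks to zero, following the technique of \cite[Theorem~6]{DR}.

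First I would fix $\by\in\Omega$ and $\bv\in\Bm$, choose $\epsilon>0$ with $\overline{B(\by,\epsilon)}\subset\Omega$, and set $\Omega_\epsilon=\Omega\setminus\overline{B(\by,\epsilon)}$. Since $H_k(\bx,\by,\bu,\bv)=c_{m,k}|\by-\bx|^{2-m}Z_k\big((\by-\bx)\bu(\by-\bx)/|\by-\bx|^{2},\bv\big)$ is the fundamental solution of $\Dtwo$ (\cite[Theorem~5.2]{DeBie}), its only singularity in $\bx$ sits at $\bx=\by$, so $\Dtwo H_k(\bx,\by,\bu,\bv)=0$ for $\bx\in\Omega_\epsilon$. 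Applying Theorem~\ref{Green} to the pair $f$ and $H_k(\,\cdot\,,\by,\,\cdot\,,\bv)$ over $\Omega_\epsilon$, where the volume integrand reduces to $(\Dtwo f)H_k$, gives
\be
\int\displaylimits_{\Omega_\epsilon}\int\displaylimits_{\Smone}(\Dtwo f)\,H_k\,dS(\bu)d\bx
&=&\int\displaylimits_{\partial\Omega}\int\displaylimits_{\Smone}\big[(Af)H_k-f(AH_k)\big]\,dS(\bu)d\sigma(\bx)\\
&&+\int\displaylimits_{\partial B(\by,\epsilon)}\int\displaylimits_{\Smone}\big[(Af)H_k-f(AH_k)\big]\,dS(\bu)d\sigma(\bx),
\ee
the small sphere carrying the orientation it inherits as part of $\partial\Omega_\epsilon$.

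Next I would let $\epsilon\to0$. Since $|\by-\bx|^{2-m}$ is locally integrable and $Z_k$ is bounded on $\Smone\times\Smone$, the left-hand volume integral converges to $\int_{\Omega}\int_{\Smone}(\Dtwo f)H_k$, the analogue of the volume term in Green's third identity (it vanishes when $f$ is a null solution of $\Dtwo$, leaving only the boundary contributions). On the small sphere I would substitute $\bx=\by-\epsilon\bw$ with $\bw\in\Smone$, so $d\sigma(\bx)=\epsilon^{m-1}dS(\bw)$; the term $(Af)H_k$ is then $O(\epsilon^{m-1}\cdot\epsilon^{2-m})=O(\epsilon)\to0$ because $Af$ stays bounded near $\by$, whereas $A$ contributes a single $\bx$-derivative of $H_k$, so $AH_k$ has a singularity of order $|\by-\bx|^{1-m}$ and the integral of $f(AH_k)$ is $O(1)$. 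Keeping the leading term, the substitution turns the argument of the reproducing kernel into the reflected variable $\bw\bu\bw\in\Smone$, and combining the reproducing property of $Z_k$, the invariance of $\HK(\R)$ under $\bu\mapsto\bw\bu\bw$, and the normalization $c_{m,k}$ — the constant for which $\Dtwo H_k=\delta(\by-\bx)Z_k(\bu,\bv)$ holds distributionally — the limit of that integral is $f(\by,\bv)$. Collecting the contributions, with the sign fixed by the orientation of $\partial B(\by,\epsilon)$, yields the asserted formula.

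The hard part is precisely this last step, the evaluation of the singular boundary integral over $\partial B(\by,\epsilon)$: one has to know the operator $A$ from Theorem~\ref{Green} explicitly enough to isolate the leading $|\by-\bx|^{1-m}$ part of $AH_k$, and then push the $\bu$-integration over $\Smone$ through the reflection $\bu\mapsto\bw\bu\bw$ so that the limit is exactly $f(\by,\bv)$ and not merely a constant multiple of it — it is the rotational invariance of $\HK(\R)$ together with the reproducing identity for $Z_k$ that makes the constant come out right. The remaining ingredients (Fubini's theorem, dominated convergence for the volume term, and the $O(\epsilon)$ bound on the small sphere) are routine.
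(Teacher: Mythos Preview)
Your proposal is correct and follows exactly the approach the paper sketches: substitute the fundamental solution $H_k$ for $g$ in Theorem~\ref{Green}, excise a small ball around the singularity at $\bx=\by$, and pass to the limit using the technique of \cite[Theorem~6]{DR}. Your remark that a volume term $\int_\Omega\int_{\Smone}(\Dtwo f)\,H_k\,dS(\bu)d\bx$ survives unless $\Dtwo f=0$ is in fact a detail the paper's stated formula glosses over.
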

\section{Green's formulas in the higher spin spaces}
It is well-known that Green's formulas are very important in many applications, especially in the study of boundary value problems. In this section, we provide Green's formulas for bosonic Laplacians acting on scalar-valued functions and Clifford-valued functions, respectively.
\subsection*{Green's formula: scalar-valued version}
\hfill \\
Let $\Omega\subset\R^m$ be an open domain, we consider scalar-valued function spaces $C^2(\Omega\times\Bm,\HK(\R))$ and define the following inner product 
\begin{eqnarray}\label{innerproduct}
\langle f\ |\ g\rangle=\int\displaylimits_{\Omega}\int\displaylimits_{\Smone}f(\bx,\bu)g(\bx,\bu)dS(\bu)d\bx,\quad f,g\in C^2(\Omega\times\Bm,\HK(\R)).
\end{eqnarray}
\begin{theorem}[Green's formula: scalar-valued version]\label{Green}
Let $\Omega\subset\R^m$ be an open domain and $f,g\in C^2(\Omega\times\Bm,\HK(\R))$. Then, we have
\be
&&\int\displaylimits_{\Omega}\int\displaylimits_{\Smone}(\Dtwo f(\bx,\bu))g(\bx,\bu)-f(\bx,\bu)(\Dtwo g(\bx,\bu))dS(\bu)d\bx\\
&=&\int\displaylimits_{\partial\Omega}\int\displaylimits_{\Smone}(Af)(\bx,\bu)g(\bx,\bu)-f(\bx,\bu)(Ag)(\bx,\bu)dS(\bu)d\sigma(\bx),
\ee
 where $\sigma(x)$ is the area element on $\partial\Omega$ and $n_{\bx}$ is the outward unit normal vector on $\partial\Omega$ and
 \be
 A=\frac{\partial}{\partial n_{\bx}}-\frac{4\langle\bu,n_{\bx}\rangle\langle D_{\bu},D_{\bx}\rangle}{m+2k-2}.
 \ee
\end{theorem}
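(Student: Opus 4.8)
The plan is to use the linearity of $\Dtwo$ and verify the identity one term at a time, writing
$\Dtwo=\Delta_{\bx}-\frac{4}{m+2k-2}\langle\bu,D_{\bx}\rangle\langle D_{\bu},D_{\bx}\rangle+\frac{4|u|^2}{(m+2k-2)(m+2k-4)}\langle D_{\bu},D_{\bx}\rangle^2$
and adding up the contribution of the three pieces to the left-hand side. In all three cases I would integrate in $\bx$ over $\Omega$ first (with $\bu$ frozen) and only afterwards in $\bu$ over $\Smone$; the two structural facts driving everything are that $\Smone$ is a closed manifold and that, for each fixed $\bx$, the functions $f(\bx,\cdot)$ and $g(\bx,\cdot)$ are homogeneous harmonic polynomials of degree $k$ in $\bu$.

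For the Laplacian term, fixing $\bu$ and applying the classical Green's second identity on $\Omega$ gives $\int_{\Omega}(\Delta_{\bx}f)g-f(\Delta_{\bx}g)\,d\bx=\int_{\partial\Omega}(\partial f/\partial n_{\bx})g-f(\partial g/\partial n_{\bx})\,d\sigma(\bx)$; integrating in $\bu$ over $\Smone$ yields precisely the $\partial/\partial n_{\bx}$ part of $A$. The third term contributes nothing at all: for fixed $\bx$, $\langle D_{\bu},D_{\bx}\rangle^2 f$ is homogeneous harmonic of degree $k-2$ in $\bu$ (each application of $\langle D_{\bu},D_{\bx}\rangle$ lowers the $\bu$-degree by one and commutes with $\Delta_{\bu}$), whereas $g(\bx,\cdot)\in\HK(\R)$ has $\bu$-degree $k$; since $|u|^2\equiv1$ on $\Smone$ and spherical harmonics of distinct degrees are $L^2(\Smone)$-orthogonal, $\int_{\Smone}(|u|^2\langle D_{\bu},D_{\bx}\rangle^2 f)\,g\,dS(\bu)=0$, and likewise with $f$ and $g$ interchanged.

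The work is in the middle term. Write $U=\langle\bu,D_{\bx}\rangle$ and $V=\langle D_{\bu},D_{\bx}\rangle$. Since $U=\sum_j u_j\partial_{x_j}$ carries the outer $\bx$-derivative, one integration by parts in $\bx$ on $\Omega$ gives
$\int_{\Omega}(UVf)g\,d\bx=\int_{\partial\Omega}\langle\bu,n_{\bx}\rangle(Vf)g\,d\sigma(\bx)-\int_{\Omega}(Vf)(Ug)\,d\bx$;
the boundary term is exactly the one needed to complete $A$, and the obstruction is the leftover bulk integral $\int_{\Omega}\int_{\Smone}(Vf)(Ug)\,dS(\bu)\,d\bx$, which is not symmetric in $f$ and $g$ as it stands. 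The point is that it becomes symmetric once the $\bu$-integration is carried out. For this I would first prove the elementary identity: for $p\in\HK(\R)$ and any $q$ homogeneous of degree $k-1$ in $\bu$,
$\int_{\Smone}(\partial_{u_j}p)\,q\,dS(\bu)=(m+2k-2)\int_{\Smone}u_j\,p\,q\,dS(\bu)$.
This comes from the divergence theorem on $\Bm$ applied to $\partial_{u_j}(pq)$, whose outward-normal boundary term on $\Smone$ is $u_j pq$, together with the vanishing of $\int_{\Smone}p\,(\partial_{u_j}q)\,dS(\bu)$ (here $p$ has $\bu$-degree $k$ and $\partial_{u_j}q$ has $\bu$-degree $k-2$, so orthogonality applies), after noting that $(\partial_{u_j}p)q$ and $p(\partial_{u_j}q)$ are both homogeneous of degree $2k-2$ and hence their ball and sphere integrals differ by the same radial constant $m+2k-2$. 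Applying this identity with $p=\partial_{x_j}g$ and $q=Vf$ and summing over $j$ turns $\int_{\Smone}(Ug)(Vf)\,dS(\bu)$ into $\frac{1}{m+2k-2}\int_{\Smone}(Vg)(Vf)\,dS(\bu)$, which is visibly symmetric in $f$ and $g$. Hence the leftover bulk integrals cancel in the antisymmetric combination, leaving
$\int_{\Omega}\int_{\Smone}(UVf)g-f(UVg)\,dS(\bu)\,d\bx=\int_{\partial\Omega}\int_{\Smone}\langle\bu,n_{\bx}\rangle\big[(Vf)g-(Vg)f\big]\,dS(\bu)\,d\sigma(\bx)$;
multiplying by $-\frac{4}{m+2k-2}$ produces exactly the $-\frac{4\langle\bu,n_{\bx}\rangle\langle D_{\bu},D_{\bx}\rangle}{m+2k-2}$ part of $A$, and summing the three contributions gives the claimed formula.

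I expect the main obstacle to be precisely this middle step, for two related reasons. First, one cannot naively "integrate by parts in $\bu$": $\partial_{u_l}$ is the ambient gradient rather than the spherical one, and $\langle\bu,D_{\bx}\rangle\langle D_{\bu},D_{\bx}\rangle f$ is in general \emph{not} harmonic in $\bu$, so the correct substitute is the homogeneity–harmonicity identity above, which encodes the principle that, when paired over $\Smone$ with the harmonic function $g$, only the harmonic part of a degree-$k$ polynomial in $\bu$ is seen; this is also the structural reason why the $|u|^2\langle D_{\bu},D_{\bx}\rangle^2$ correction, the very piece that projects back onto harmonics, leaves no boundary trace. Second, the constants $m+2k-2$ and $m+2k-4$ must be tracked carefully through the reductions so that they recombine to give precisely the operator $A$ in the statement; this is routine but is where a sign or factor slip would most easily hide.
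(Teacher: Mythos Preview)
Your proposal is correct and follows essentially the same route as the paper: split $\Dtwo$ into its three pieces, use the classical Green identity for $\Delta_{\bx}$, kill the $|u|^2\langle D_{\bu},D_{\bx}\rangle^2$ term via orthogonality of spherical harmonics of different degrees, and for the middle term integrate by parts in $\bx$ first and then symmetrize the leftover bulk integral $\int(Vf)(Ug)$ by passing through the ball via the divergence theorem and invoking the $\mathcal{H}_{k-2}$--$\mathcal{H}_k$ orthogonality. The only cosmetic difference is that the paper converts $\int_{\Smone}(Vf)(Ug)\,dS(\bu)$ directly into the ball integral $\int_{\Bm}(Vf)(Vg)\,d\bu$ and argues symmetry there, whereas you package the same divergence--plus--homogeneity computation as the spherical identity $\int_{\Smone}(\partial_{u_j}p)q\,dS(\bu)=(m+2k-2)\int_{\Smone}u_jpq\,dS(\bu)$; the underlying mechanism is identical.
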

\begin{proof}
The main tools used here are Stokes' Theorem and the orthogonality between homogeneous harmonic polynomials with respect to the $L^2$ integral inner product over the unit sphere. Firstly, let us look at
\begin{align}\label{ezero}
&\int\displaylimits_{\Omega}\int\displaylimits_{\Smone}(\Dtwo f(\bx,\bu))g(\bx,\bu)dS(\bu)d\bx\nonumber\\
=&\int\displaylimits_{\Omega}\int\displaylimits_{\Smone}\bigg[\bigg(\Delta_{\bx}-\frac{4\langle \bu,D_{\bx}\rangle\langle D_{\bu},D_{\bx}\rangle}{m+2k-2}+\frac{4|u|^2\langle D_{\bu},D_{\bx}\rangle^2}{(m+2k-2)(m+2k-4)}\bigg) f(\bx,\bu)\bigg]\nonumber\\
&\quad\quad\quad\quad\quad\quad\quad\quad\quad\quad\quad\quad\quad\quad\quad\quad\quad\quad\quad\quad\quad\quad\quad\quad \cdot g(\bx,\bu)dS(\bu)d\bx\nonumber\\
=&\int\displaylimits_{\Omega}\int\displaylimits_{\Smone}\bigg[\bigg(\Delta_{\bx}-\frac{4\langle \bu,D_{\bx}\rangle\langle D_{\bu},D_{\bx}\rangle}{m+2k-2}\bigg) f(\bx,\bu)\bigg]g(\bx,\bu)dS(\bu)d\bx.
\end{align}
the last equation comes from the fact that
\be
&&\int\displaylimits_{\Smone}|\bu|^2\langle D_{\bu},D_{\bx}\rangle^2f(\bx,\bu)g(\bx,\bu)dS(\bu)\\
&=&\int\displaylimits_{\Smone}\langle D_{\bu},D_{\bx}\rangle^2f(\bx,\bu)g(\bx,\bu)dS(\bu)=0,
\ee
where $\langle D_{\bu},D_{\bx}\rangle^2f\in\mathcal{H}_{k-2}$, $g\in\HK$, hence they are orthogonal to each other with respect to the integral over the unit sphere. On the one hand,
 by Green's formula with respect to $\bx$, we have
 \begin{align}\label{eone}
& \int\displaylimits_{\Omega}\int\displaylimits_{\Smone}\Delta_{\bx}f(\bx,\bu)g(\bx,\bu)dS(\bu)d\bx=\int\displaylimits_{\Omega}\int\displaylimits_{\Smone}f(\bx,\bu)\Delta_{\bx}g(\bx,\bu)dS(\bu)d\bx\nonumber\\
 &+ \int\displaylimits_{\partial\Omega}\int\displaylimits_{\Smone}\frac{\partial f(\bx,\bu)}{\partial n_x}g(\bx,\bu)dS(\bu)d\sigma(\bx)
 - \int\displaylimits_{\partial\Omega}\int\displaylimits_{\Smone} f(\bx,\bu)\frac{\partial g(\bx,\bu)}{\partial n_x}dS(\bu)d\sigma(\bx).
 \end{align}
 On the other hand, we have
 \be
&& \int\displaylimits_{\Omega}\int\displaylimits_{\Smone}\langle \bu,D_{\bx}\rangle\langle D_{\bu},D_{\bx}\rangle f(\bx,\bu)g(\bx,\bu)dS(\bu)d\bx\\
&=&-\int\displaylimits_{\Omega}\int\displaylimits_{\Smone}\langle D_{\bu},D_{\bx}\rangle f(\bx,\bu)\langle\bu,D_{\bx}\rangle g(\bx,\bu)dS(\bu)d\bx\\
&&+\int\displaylimits_{\partial\Omega}\int\displaylimits_{\Smone}\langle\bu,n_x\rangle\langle D_{\bu},D_{\bx}\rangle f(\bx,\bu)g(\bx,\bu)dS(\bu)d\sigma(\bx).
\ee
Noticing that $\bu$ is the unit normal vector on the unit sphere, we apply Stokes' Theorem with respect to $\bu$ to the first integral above, which is equal to
\be
&&-\int\displaylimits_{\Omega}\int\displaylimits_{\Bm}\sum_{j=1}^m\partial_{u_j}\langle D_{\bu},D_{\bx}\rangle f(\bx,\bu)\partial_{x_j} g(\bx,\bu)d\bu d\bx\\
&-&\int\displaylimits_{\Omega}\int\displaylimits_{\Bm}\langle D_{\bu},D_{\bx}\rangle f(\bx,\bu)\langle D_{\bu},D_{\bx}\rangle g(\bx,\bu)d\bu d\bx\\
 &=&-\int\displaylimits_{\Omega}\int\displaylimits_{\Bm}\langle D_{\bu},D_{\bx}\rangle f(\bx,\bu)\langle D_{\bu},D_{\bx}\rangle g(\bx,\bu)d\bu d\bx.
 \ee
 where the equation above comes from the fact that $\partial_{u_j}\langle D_{\bu},D_{\bx}\rangle f(\bx,\bu)\in\mathcal{H}_{k-2}$ and $\partial_{x_j} g(\bx,\bu)\in\HK$, which implies that they are orthogonal to each other. Now, we apply Stokes' Theorem to $\bx$ and $\bu$ separately and also use the orthogonality between $\HK$ and $\mathcal{H}_{k-2}$, we eventually obtain
 \begin{eqnarray}\label{etwo}
&& \int\displaylimits_{\Omega}\int\displaylimits_{\Smone}\langle \bu,D_{\bx}\rangle\langle D_{\bu},D_{\bx}\rangle f(\bx,\bu)g(\bx,\bu)dS(\bu)d\bx\nonumber\\
 &=& \int\displaylimits_{\Omega}\int\displaylimits_{\Smone}\langle \bu,D_{\bx}\rangle\langle D_{\bu},D_{\bx}\rangle f(\bx,\bu)g(\bx,\bu)dS(\bu)d\bx\nonumber\\
 &+& \int\displaylimits_{\partial\Omega}\int\displaylimits_{\Smone}\langle \bu,n_{\bx}\rangle\langle D_{\bu},D_{\bx}\rangle f(\bx,\bu)g(\bx,\bu)dS(\bu)d\sigma(\bx)\nonumber\\
 &-&\int\displaylimits_{\partial\Omega}\int\displaylimits_{\Smone} f(\bx,\bu)\langle \bu,n_{\bx}\rangle\langle D_{\bu},D_{\bx}\rangle g(\bx,\bu)dS(\bu)d\sigma(\bx).
 \end{eqnarray}
 Now, we plug \eqref{eone} and \eqref{etwo} into \eqref{ezero}, which completes the proof.
\end{proof}
\begin{remark}
The Green's formula above shows that bosonic Laplacians $\Dtwo$ are self-adjoint with respect to the inner product $\langle\ |\ \rangle$ given in \eqref{innerproduct} for functions vanishing on the boundary of the domain with respect to $\bx$.
\end{remark}
\subsection*{Green's formula: Clifford-valued version}
\hfill\\
The difficulty in the Clifford-valued case is caused by the fact that multiplication of Clifford numbers is not commutative in general. In our case, we need to use the connection between Rarita-Schwinger type operators and bosonic Laplacians given in \eqref{connection} and Stokes' Theorems for Rairta-Schwinger type operators given in \cite[Theorem 4,6,8,10]{DR1}.
\begin{theorem}[Green's formula: Clifford-valued version]
Let $\Omega\subset\R^m$ be a bounded domain, we assume that $f,g\in C^2(\Omega\times\Bm,\HK(\mathcal{C}l_m))$, then we have
\begin{align*}
&\int_{\Omega}\int_{\Smone}\Dtwo f(\bx,\bu)g(\bx,\bu)dS(\bu)d\bx\\
=&\int_{\Omega}\int_{\Smone} f(\bx,\bu)\Dtwo g(\bx,\bu)dS(\bu)d\bx\\
&+\int_{\partial\Omega}\int_{\Smone}fP_{k,r}^+d\sigma_{\bx}\bigg(-R_kP_k^++\frac{2Q_kP_k^-}{m+2k-4}\bigg)gdS(\bu)\\
&+\int_{\partial\Omega}\int_{\Smone}fP_{k,r}^-d\sigma_{\bx}\bigg(-\frac{2R_kP_k^+}{m+2k-4}+\frac{(m+2k)Q_kP_k^-}{m+2k-4}\bigg)gdS(\bu)\\
&+\int_{\partial\Omega}\int_{\Smone}f\bigg(P_{k,r}^+R_{k,r}+\frac{2P_{k,r}^-T_{k,r}}{m+2k-4}\bigg)d\sigma_{\bx}P_k^+gdS(\bu)\\
&+\int_{\partial\Omega}\int_{\Smone}f\bigg(-\frac{2P_{k,r}^+T_{k,r}^*}{m+2k-4}-\frac{(m+2k)P_{k,r}^-Q_{k,r}}{m+2k-4}\bigg)d\sigma_{\bx}P_k^-gdS(\bu),
\end{align*}
where $P_{k,r}$ stands for the projection operator $P_k$ acting from the right hand side.
\end{theorem}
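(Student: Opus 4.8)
The plan is to combine the factorisation $\Dtwo=R_kA_k+Q_kB_k$ from \eqref{connection} with the Stokes-type theorems for the Rarita--Schwinger-type operators of \cite[Theorems 4, 6, 8, 10]{DR1}, applying the latter twice. Throughout, decompose both arguments by the Almansi--Fischer decomposition, $f=P_k^+f+P_k^-f$ and $g=P_k^+g+P_k^-g$, and recall that $A_kf\in C^\infty(\R^m,\Mk(\Clm))$ while $B_kf\in C^\infty(\R^m,\bu\Mkk(\Clm))$. The auxiliary fact used over and over is that a $\Mk(\Clm)$-valued polynomial and a $\bu\Mkk(\Clm)$-valued polynomial --- more generally, polynomials of different homogeneity in $\bu$ --- are orthogonal over $\Smone$ with respect to $dS(\bu)$; this is the Clifford-valued orthogonality of Lemma 5 in \cite{Dunkl}, already used in the proof of Theorem \ref{PoissonEqn}.

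First I would split
\[
\int_\Omega\int_{\Smone}(\Dtwo f)\,g\,dS(\bu)\,d\bx=\int_\Omega\int_{\Smone}(R_kA_kf)\,g\,dS(\bu)\,d\bx+\int_\Omega\int_{\Smone}(Q_kB_kf)\,g\,dS(\bu)\,d\bx,
\]
and use the orthogonality above to replace $g$ by $P_k^+g$ in the first integral and by $P_k^-g$ in the second. Each piece is now a composition of two first-order Rarita--Schwinger-type operators acting on $f$, so I would integrate by parts twice, invoking in turn the Stokes theorems for $R_k$, $T_k$, $T_k^*$, $Q_k$, so as to transfer both $\bx$-derivatives off $f$ and onto $g$. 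Since $f$ always stands on the left of the product, each integration by parts slides a Clifford surface element $d\sigma_\bx$ into the integrand, and any projection or Rarita--Schwinger operator that lands on $f$ must be reinterpreted as the corresponding right-acting operator $P_{k,r}^\pm$, $R_{k,r}$, $T_{k,r}$, $T_{k,r}^*$, $Q_{k,r}$, while the operators that remain on $g$ stay left-acting. At every stage the Almansi--Fischer orthogonality is used again to discard the cross terms pairing a polynomial of homogeneity $k$ in $\bu$ with one of homogeneity $k-1$ or $k-2$. Once all $\bx$-derivatives have been moved, the volume contribution reassembles --- via \eqref{connection} read from the right, together with the fact that $\Dtwo$ has scalar coefficients --- into $\int_\Omega\int_{\Smone}f\,(\Dtwo g)\,dS(\bu)\,d\bx$, and the four surface contributions are precisely those displayed: two in which the surviving operators act on $g$ from the left and two in which they act on $f$ from the right.

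The step I expect to be the main obstacle is the non-commutative bookkeeping involved in assembling the boundary terms. The two rounds of Stokes produce on the order of eight surface integrals, each with a sandwiched surface element $d\sigma_\bx$ whose placement and orientation must be tracked; one has to sort them into the $R_k$-type ones (adjacent to $P_{k,r}^+$ or $P_k^+$) and the $Q_k$-type ones, repeatedly cancel cross terms by orthogonality, and verify that the constants $\tfrac{2}{m+2k-4}$ and $\tfrac{m+2k}{m+2k-4}$ coming from $A_k$ and $B_k$, together with the constants appearing in the four Stokes formulas of \cite{DR1}, combine into exactly the coefficients written in the statement. The bulk side, by contrast, is routine: after every $\bx$-derivative has been transferred, all volume terms except $\int_\Omega\int_{\Smone}f\,(\Dtwo g)\,dS(\bu)\,d\bx$ cancel.
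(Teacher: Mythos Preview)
Your proposal is correct and follows essentially the same approach as the paper: both arguments express $\Dtwo$ via the Rarita--Schwinger-type operators, apply the Stokes theorems of \cite[Theorems 4, 6, 8, 10]{DR1} twice to each summand (with repeated use of the Almansi--Fischer orthogonality from \cite[Lemma 5]{Dunkl}), and then collect the bulk and the four boundary contributions. The only organisational difference is that the paper writes out the four-term expansion of $\Dtwo$ in two equivalent orderings at the outset (so that after integration by parts the left-acting operators on $g$ are immediately recognised as the second ordering), whereas you package the same four terms as $R_kA_k+Q_kB_k$ and defer that identification to ``\eqref{connection} read from the right''; this is the same content.
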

\begin{proof}
Recall that
\begin{align*}
\Dtwo &=-R_k^2P_k^++\frac{2R_kT_kP_k^-}{m+2k-4}-\frac{2Q_kT_k^*P_k^+}{m+2k-4}-\frac{(m+2k)Q_k^2P_k^-}{m+2k-4}\\
&=-R_k^2P_k^++\frac{2T_k^*R_kP_k^+}{m+2k-4}-\frac{2T_kQ_kP_k^-}{m+2k-4}-\frac{(m+2k)Q_k^2P_k^-}{m+2k-4}.
\end{align*}
Then, we will calculate 
\begin{eqnarray}\label{id0}
\int_{\Omega}\int_{\Smone}\Dtwo f(\bx,\bu)g(\bx,\bu)dS(\bu)d\bx
\end{eqnarray}
 as the sum of four integrals regarding the four terms in the first expression of $\Dtwo$ above. In the calculation below, we write the functions $f$ without the variables for convenience. Firstly, we apply \cite[Lemma 5]{Dunkl} and Stokes' Theorem for $R_k$ \cite[Theorem 4]{DR1} to obtain 
\begin{align}\label{id1}
&\int_{\Omega}\int_{\Smone}fP_{k,r}^+R_{k,r}^2gdS(\bu)d\bx
=\int_{\Omega}\int_{\Smone}fP_{k,r}^+R_{k,r}^2P_k^+gdS(\bu)d\bx\nonumber\\
=&-\int_{\Omega}\int_{\Smone}fP_{k,r}^+R_{k,r}R_kP_k^+gdS(\bu)d\bx+\int_{\partial\Omega}\int_{\Smone}fP_{k,r}^+R_{k,r}d\sigma_{\bx}P_k^+gdS(\bu)\nonumber\\
=&\int_{\Omega}\int_{\Smone}fP_{k,r}^+R_k^2P_k^+gdS(\bu)d\bx-\int_{\partial\Omega}\int_{\Smone}fP_{k,r}^+d\sigma_{\bx}R_kP_k^+gdS(\bu)\nonumber\\
&+\int_{\partial\Omega}\int_{\Smone}fP_{k,r}^+R_{k,r}d\sigma_{\bx}P_k^+gdS(\bu).
\end{align}
With the help of Stokes' Theorem for $R_k$ \cite[Theorem 4]{DR1} and Stokes' Theorem for $T_k$ \cite[Theorem 10]{DR1}, we can also have
\begin{align}\label{id2}
&\int_{\Omega}\int_{\Smone}fP_{k,r}^-T_{k,r}R_{k,r}gdS(\bu)d\bx\nonumber\\
=&\int_{\Omega}\int_{\Smone}fP_{k,r}^- T_k^*R_kP_k^+gdS(\bu)d\bx-\int_{\partial\Omega}\int_{\Smone}fP_{k,r}^-d\sigma_{\bx}R_kP_k^+gdS(\bu)\nonumber\\
&+\int_{\partial\Omega}\int_{\Smone}fP_{k,r}^-T_{k,r}d\sigma_{\bx}P_k^+gdS(\bu).
\end{align}
Similarly, one can obtain
\begin{align}\label{id3}
&\int_{\Omega}\int_{\Smone}fP_{k,r}^+T_{k,r}^*Q_{k,r}gdS(\bu)d\bx\nonumber\\
=&\int_{\Omega}\int_{\Smone}fP_{k,r}^+T_kQ_kP_k^-gdS(\bu)d\bx-\int_{\partial\Omega}\int_{\Smone}fP_{k,r}^+d\sigma_{\bx}Q_kP_k^-gdS(\bu)\nonumber\\
&+\int_{\partial\Omega}\int_{\Smone}fP_{k,r}^+T_{k,r}^*d\sigma_{\bx}P_k^-gdS(\bu),
\end{align}
and
\begin{align}\label{id4}
&\int_{\Omega}\int_{\Smone}fP_{k,r}^-Q_{k,r}^2gdS(\bu)d\bx\nonumber\\
=&\int_{\Omega}\int_{\Smone}fP_{k,r}^-Q_k^2P_k^-gdS(\bu)d\bx-\int_{\partial\Omega}\int_{\Smone}fP_{k,r}^-d\sigma_{\bx}Q_kP_k^-gdS(\bu)\nonumber\\
&+\int_{\partial\Omega}\int_{\Smone}fP_{k,r}^-Q_{k,r}d\sigma_{\bx}P_k^-gdS(\bu).
\end{align}
Now, we plug \eqref{id1}-\eqref{id4} into \eqref{id0}, we obtain
\begin{align*}
&\int_{\Omega}\int_{\Smone}\Dtwo f(\bx,\bu)g(\bx,\bu)dS(\bu)d\bx\\
=&\int_{\Omega}\int_{\Smone} f(\bx,\bu)\Dtwo g(\bx,\bu)dS(\bu)d\bx\\
&+\int_{\partial\Omega}\int_{\Smone}fP_{k,r}^+d\sigma_{\bx}\bigg(-R_kP_k^++\frac{2Q_kP_k^-}{m+2k-4}\bigg)gdS(\bu)\\
&+\int_{\partial\Omega}\int_{\Smone}fP_{k,r}^-d\sigma_{\bx}\bigg(-\frac{2R_kP_k^+}{m+2k-4}+\frac{(m+2k)Q_kP_k^-}{m+2k-4}\bigg)gdS(\bu)\\
&+\int_{\partial\Omega}\int_{\Smone}f\bigg(P_{k,r}^+R_{k,r}+\frac{2P_{k,r}^-T_{k,r}}{m+2k-4}\bigg)d\sigma_{\bx}P_k^+gdS(\bu)\\
&+\int_{\partial\Omega}\int_{\Smone}f\bigg(-\frac{2P_{k,r}^+T_{k,r}^*}{m+2k-4}-\frac{(m+2k)P_{k,r}^-Q_{k,r}}{m+2k-4}\bigg)d\sigma_{\bx}P_k^-gdS(\bu),
\end{align*}
as desired. We remind the reader that $\Dtwo$ on the right hand side above is obtained from the second expression of $\Dtwo$ given in the very beginning of the proof.
\end{proof}
\begin{remark}
If we replace $g$ by the fundamental solution of $\Dtwo$ in the previous theorem, one can have the Borel-Pompeiu formula obtained in \cite{DR} with a standard argument at the singular point.
\end{remark}



\end{document}